
\documentclass[12pt]{article}%
\usepackage{amsmath}
\usepackage{amscd}
\usepackage{amsfonts}
\usepackage{amssymb}%
\setcounter{MaxMatrixCols}{30}
\newtheorem{theorem}{Theorem}[section]
\newtheorem{lemma}[theorem]{Lemma}
\newtheorem{proposition}[theorem]{Proposition}

\newtheorem{definition}[theorem]{Definition}
\newtheorem{corollary}[theorem]{Corollary}

\newenvironment{proof}{\bf Proof. \rm}{$\Box$}
\newcommand{\be}{\begin{equation}}
\newcommand{\ee}{\end{equation}}
\newcommand{\bes}{\begin{equation*}}
\newcommand{\ees}{\end{equation*}}

\newcommand{\cS}{\mathcal{S}}

\newcommand{\Rp}{\mathbb{R}_+}

\begin{document}


\title{Dilation theorems for contractive semigroups}

\author{Orr Shalit}
\date{Written in June, 2007, slightly revised on April, 2010}
\maketitle


\section{Introduction}
This note records some dilation theorems about contraction semigroups on a Hilbert space - all of which fall into the categories ``known" or ``probably known" - that I proved while working on my PhD in mathematics (under the supervision of Baruch Solel). It is convenient to have them recorded for reference.

In section 2, we prove that every continuous two parameter semigroup of contractions on a Hilbert space
admits a minimal isometric dilation. The main idea of the proof - to use Ando's theorem for the pair of cogenerators of the commuting one-parameter semigroups - was suggested to me by Paul Muhly. This was already proved twice before, by M. S\l oci\'nski and by M. Ptak.

In section 3 we prove that certain multiparameter semigroups have regular unitary and isometric dilations. These are semigroups of operators indexed by semigroups $\cS$ of the from
$$\cS = \sum_{i}\cS_i ,$$
where $\cS_i$ are \emph{commensurable subsemigroups} of $\Rp$ (see definitions below).

In section 4 we prove that a ``multiparameter" semigroup of
coisometries on a Hilbert space, not necessarily continuous, has a
unitary and an isometric dilation.

\section{A continuous parameter version of Ando's Theorem}
\begin{theorem}\label{thm:mainthm}
Let $\{T_1 (s)\}_{s\geq0}$ and $\{T_2 (s)\}_{s\geq0}$ be two commuting continuous semigroups of contractions on a complex Hilbert space $H$. There exists a complex Hilbert space $K \supseteq H$ and a pair of two commuting continuous semigroups of isometries $\{V_1 (s) \}_{s\geq0}$ and $\{V_2 (s) \}_{s\geq0}$ such that for all $s,t \geq0$,
\begin{equation}\label{eq:what we want}
T_1 (s) T_2 (t) = P_H V_1 (s) V_2 (t) \big|_H ,
\end{equation}
where $P_H$ denotes the orthogonal projection on $H$. $K$, $\{V_1 (s) \}_{s\geq0}$ and $\{V_2 (s) \}_{s\geq0}$ can be chosen to be \emph{minimal}, in the sense that
\begin{equation}
K = \bigvee_{s,t\geq0} V_1 (s) V_2 (t) H .
\end{equation}
\end{theorem}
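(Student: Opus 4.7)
The plan is to follow Muhly's suggestion and reduce the continuous two-parameter problem to Ando's discrete theorem by passing to cogenerators. For $i=1,2$, let $A_i$ be the generator of $T_i(\cdot)$ and let $T_i := (A_i+I)(A_i-I)^{-1}$ denote its cogenerator. By the Sz.-Nagy--Foias theory, $T_i$ is a contraction on $H$ with $1 \notin \sigma_p(T_i)$, and the semigroup $T_i(s)$ can be recovered from $T_i$ by a fixed bounded Borel functional calculus on the closed unit disk that is the same for every Hilbert space. Since $\{T_1(s)\}$ and $\{T_2(s)\}$ commute, so do their resolvents and hence their cogenerators $T_1, T_2$. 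Applying Ando's theorem to $(T_1,T_2)$ produces a Hilbert space $K \supseteq H$ and commuting isometries $V_1, V_2 \in B(K)$ satisfying $T_1^m T_2^n = P_H V_1^m V_2^n |_H$ for all $m,n \geq 0$, and we may arrange $K = \bigvee_{m,n\geq 0} V_1^m V_2^n H$.

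The main obstacle is to show that each $V_i$ is itself the cogenerator of an isometric $C_0$-semigroup, which amounts to verifying $1 \notin \sigma_p(V_i)$. The expected mechanism is that any unit vector $x \in K$ with $V_1 x = x$ also satisfies $V_1^* x = x$ (a standard consequence of $V_1$ being an isometry), so $x$ is fixed by the von Neumann algebra generated by $V_1$; combining this with the minimality $K = \bigvee V_1^m V_2^n H$ and the intertwining $P_H V_1^m V_2^n|_H = T_1^m T_2^n$ should force a nonzero fixed vector for $T_1$ in $H$, contradicting $1 \notin \sigma_p(T_1)$. A symmetric argument handles $V_2$. Making this precise is the technical heart of the proof and may require refining Ando's construction so as to guarantee Sz.-Nagy-type one-variable minimality for each component, rather than working with an arbitrary Ando dilation.

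Once $V_i$ is identified as a cogenerator, let $\{V_i(s)\}_{s\geq 0}$ be the isometric $C_0$-semigroup it generates. The two semigroups commute because the bounded Borel functional calculus in $V_1$ lies in the bicommutant of $V_2$ (and conversely). Continuity of $s \mapsto V_i(s)$ is automatic from the Sz.-Nagy--Foias correspondence. Applying the common functional calculus to both sides of the discrete identity $T_1^m T_2^n = P_H V_1^m V_2^n|_H$ then yields the dilation formula \eqref{eq:what we want}. Finally, minimality transfers at once: each $V_i^m$ lies in the strong closure of polynomials in $\{V_i(s)\}_{s\geq 0}$, so $\bigvee_{s,t\geq 0} V_1(s) V_2(t) H \supseteq \bigvee_{m,n\geq 0} V_1^m V_2^n H = K$.
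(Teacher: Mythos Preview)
Your overall strategy coincides with the paper's: pass to cogenerators, apply Ando, verify that the dilated isometries are again cogenerators, and then push the functional calculus through. The commutation of $T_1,T_2$, the commutation of the resulting isometric semigroups, and the passage from the discrete dilation identity to \eqref{eq:what we want} via $e_{s,r}$-approximants are all handled essentially as the paper does them.

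The genuine gap is exactly where you flag it, but your sketched mechanism does not work. From $V_1x=x$ you correctly get $V_1^*x=x$, and if you use the specific Ando construction (where $H^\perp$ is $V_i$-invariant) you can deduce $T_1(P_Hx)=P_Hx$. Since $1\notin\sigma_p(T_1)$ this only gives $P_Hx=0$, i.e.\ the fixed space of $V_1$ sits inside $H^\perp$; it does \emph{not} force a nonzero fixed vector for $T_1$ in $H$, and in fact the minimal Ando dilation can perfectly well have $1\in\sigma_p(V_i)$. So the contradiction you aim for never materializes. The paper resolves this with a separate lemma (its Lemma~\ref{lem:evalnot1}): starting from Ando's $(K,V_1,V_2)$, one decomposes $K=H\oplus M\oplus L_1\oplus L_2$ according to the fixed subspaces $\tilde L_i=\ker(V_i-I)$, computes the block matrices of $V_1,V_2$, and uses commutation together with the absence of fixed vectors for the diagonal blocks to show that the off-diagonal entries into $L_1$ vanish. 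This lets one \emph{excise} $L_1$ (and then, by a second pass, the remaining $V_2$-fixed part) and obtain a smaller commuting isometric dilation $(G,U_1,U_2)$ with $1\notin\sigma_p(U_i)$. Your hedge ``may require refining Ando's construction'' is therefore exactly right, but it is a substantial block-matrix argument, not a soft consequence of minimality.

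A smaller point: once you modify the Ando dilation to kill the fixed vectors, you lose the discrete minimality $K=\bigvee V_1^mV_2^nH$, so your final line does not quite give minimality of the continuous dilation. The paper simply restricts, at the very end, to $\bigvee_{s,t\ge0}V_1(s)V_2(t)H$; that fix works here too.
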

By \emph{continuous semigroup} we mean strongly continuous (or
weakly continuous at $0$, which amounts to the same). I have been
looking for this result for a while. Paul Muhly (on 5.6.2006 in
Baruch Solel's office) suggested to prove this by applying Ando's
theorem to the cogenerators of $\{T_1 (s)\}_{s\geq0}$ and $\{T_2
(s)\}_{s\geq0}$. The purpose of this note is to work through the
details of this proof. A few days after proving this result, I was
told by Baruch Solel that this result was already proved in 1974
by S\l oci\'nski (\cite{Slocinski}), and later, in 1985, and by a
different method, by Ptak (\cite{Ptak}), under the assumption that
$H$ is seperable. The proof we give here is close to S\l
oci\'nski's. The difference is that in order to justify the fact
that the dilations of the cogenerators serve as cogenerators
themselves, S\l oci\'nski uses the technology of spectral
measures, whereas I use the following lemma.

\begin{lemma}\label{lem:evalnot1}
Let $T_1$ and $T_2$ be two contractions on a Hilbert space $H$, such that neither of them has $1$ as an eigenvalue. Then there exists an isometric dilation  $(G,U_1,U_2)$ of $(H,T_1,T_2)$ such that neither $U_1$ nor $U_2$ has $1$ as an eigenvalue.
\end{lemma}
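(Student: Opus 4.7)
The plan is to invoke Ando's theorem in a concrete form, then eliminate any surviving $1$-eigenvalues by a Sch\"affer-style enlargement.

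First, I would apply Ando's theorem to $(T_1, T_2)$, arranging that $V_1$ is the Sch\"affer isometric dilation of $T_1$ on $H_+ = H \oplus \mathcal D_1 \oplus \mathcal D_1 \oplus \cdots$, with $\mathcal D_1 = \overline{(I - T_1^* T_1)^{1/2} H}$, and $V_2$ the commuting isometric dilation of $T_2$ furnished by Ando's proof. The point of this specific form is that the Sch\"affer shift structure automatically kills eigenvalue $1$ of $V_1$: the fixed-point condition $V_1 \xi = \xi$, combined with the recurrence $\xi_i = \xi_{i+1}$ for $i \geq 1$ and $\ell^2$-summability, forces all Sch\"affer components $\xi_i$ with $i \geq 1$ to vanish, and the remaining equation $T_1 \xi_0 = \xi_0$ then forces $\xi_0 = 0$ by hypothesis.

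The substantive work is then to kill a possible eigenvalue $1$ of $V_2$. Let $N_2 = \ker(V_2 - I)$. The commutation relations $V_1 V_2 = V_2 V_1$ and $V_1^* V_2^* = V_2^* V_1^*$ readily imply that both $V_1$ and $V_1^*$ preserve $N_2$, so $V_1|_{N_2}$ is an isometry on $N_2$ and the projection $P_{N_2}$ commutes with $V_1$. I would then enlarge to $G = H_+ \oplus \ell^2(\mathbb{N}; N_2)$ and define
\begin{align*}
U_2(k, (\ell_i)) &= \bigl(V_2 (I - P_{N_2}) k,\; (P_{N_2} k,\, \ell_1, \ell_2, \ldots)\bigr),\\
U_1(k, (\ell_i)) &= \bigl(V_1 k,\; (V_1 \ell_1, V_1 \ell_2, \ldots)\bigr).
\end{align*}
A direct verification using $V_1 P_{N_2} = P_{N_2} V_1$ shows that $U_1, U_2$ are commuting isometries on $G$. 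The Sch\"affer-type shift structure of $U_2$ forces $\ker(U_2 - I) = 0$, and $\ker(U_1 - I) = 0$ is inherited from $\ker(V_1 - I) = 0$ on both $H_+$ and $N_2$.

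The main obstacle is verifying that $(G, U_1, U_2)$ is still an isometric dilation of $(H, T_1, T_2)$. A quick computation yields $P_H U_2 h = T_2 h - P_H P_{N_2} h$ for $h \in H$, so this construction produces a dilation precisely when $N_2 \perp H$. The weaker assertion $N_2 \cap H = \{0\}$ is immediate (any $h \in N_2 \cap H$ would satisfy $T_2 h = P_H V_2 h = h$, contradicting the hypothesis on $T_2$), but the full orthogonality $N_2 \perp H$ is what I expect to require the most care. I would try to establish it by exploiting the Sch\"affer form of $V_1$ together with the commutation $V_1 V_2 = V_2 V_1$ to propagate the fixed-point equation $V_2 \xi = \xi$ through the Sch\"affer layers, reducing it component-by-component to the impossible equation $T_2 h = h$; this should force the $H$-component of any $\xi \in N_2$ to vanish. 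If this orthogonality cannot be arranged directly, the fallback would be to pass first to a larger, non-minimal Ando dilation in which $N_2 \perp H$ does hold, before carrying out the Sch\"affer enlargement above.
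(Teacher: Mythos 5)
Your proposal is correct in substance but follows a genuinely different route from the paper's. The paper keeps the abstract Ando dilation $(K,V_1,V_2)$, shows that the fixed spaces $\tilde L_i=\ker(V_i-I)$ lie in $H^\perp$ and (after a block-matrix analysis) reduce both isometries, and then simply cuts them off; you instead exploit the concrete Sch\"affer shift structure of Ando's construction to get $\ker(V_1-I)=0$ for free, and then absorb $N_2=\ker(V_2-I)$ by an $\ell^2(\mathbb{N};N_2)$ enlargement. Your construction does work: since an isometry fixes a vector iff its adjoint does, $N_2$ reduces $V_2$, and as you note it reduces $V_1$ as well, so $U_1,U_2$ are commuting isometries with no fixed vectors. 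The one step you flag, $N_2\perp H$, is true and needs no layer-by-layer propagation: in Ando's construction $K\ominus H$ is invariant under $V_2$, so $P_H V_2=T_2P_H$, and for $\xi\in N_2$ this gives $P_H\xi=P_H V_2\xi=T_2P_H\xi$, whence $P_H\xi=0$ because $1$ is not an eigenvalue of $T_2$ --- this is precisely the opening observation of the paper's proof. Two further remarks. First, to get the full power-dilation identity $P_H U_1^mU_2^n|_H=T_1^mT_2^n$ (not just the case $m=0$, $n=1$ you compute) observe that $N_2^\perp\subseteq H_+\subseteq G$ contains $H$, is invariant under both $U_1$ and $U_2$, and $U_i$ agrees with $V_i$ there; this follows from the reducing properties you already established. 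Second, once you know that $N_2$ reduces both $V_1$ and $V_2$ and that $N_2\perp H$, the enlargement is actually unnecessary: restricting $V_1,V_2$ to $N_2^\perp$ already yields the desired dilation, which is in effect what the paper does. Your approach buys concreteness (no block-matrix bookkeeping) at the cost of a larger, non-minimal space; the paper's buys a smaller final space at the cost of working out the matrix structure of $V_1$ and $V_2$.
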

\begin{proof}
Let $(K,V_1,V_2)$ be the isometric dilation described in the proof of Theorem I.6.1, \cite{SzNF70} (Ando's Theorem). For $i = 1,2$, denote
$$\tilde{L}_i = \{x \in K : V_i x = x \}.$$
For all $x \in \tilde{L}_i$, we can write $x = x_1 + x_2$, with $x_1 \in H$ and $x_2 \in H^\perp$. It follows from the properties of that dilation that $V_i x = T_i x_1 + \tilde{x}_2$, where $\tilde{x}_2 \in H^\perp$, thus for $i = 1,2$ we see that $\tilde{L}_i \subseteq H^\perp$ (because $T_i$ was assumed to have no invariant vectors). Thus $K = H \oplus M \oplus L_1 \oplus L_2$, where $L_1 = \tilde{L}_1$, $L_2 = \left( \tilde{L}_1 \vee \tilde{L}_2 \right) \ominus \tilde{L}_1$, and $M = K \ominus \left( H \oplus L_1 \oplus L_2 \right)$. Now if $x \in \tilde{L}_2$, then $V_2 V_1 x = V_1 V_2 x = V_1 x$, whence $V_1 \tilde{L}_2 \subseteq \tilde{L}_2$, and this also means that $V_1 L_2 \subseteq L_1 \oplus L_2$. It follows that $V_1$ has the following structure:
\begin{equation}
V_1 = \begin{pmatrix}
  A &  0 & 0   & \\
  B &  I & C   & \\
  D &  0 & W_1 & \\
\end{pmatrix} .
\end{equation}
(The first row corresponds to $H \oplus M$, the second and third correspond to $L_1$ and $L_2$, respectively). Since $V_1$ is an isometry, we have that
$$I = V_1^* V_1 = \begin{pmatrix}
  *   &  B^* &* & \\
  *   &  *   & C& \\
  *   &  *   & * & \\
\end{pmatrix} ,$$
so we conclude that $B = C = 0$ \footnote{Of course, these operators are not really equal because they operate between different spaces.}, and $V_1$ turns out to have the form
\begin{equation}
V_1 = \begin{pmatrix}
  A &  0 & 0   & \\
  0 &  I & 0   & \\
  D &  0 & W_1 & \\
\end{pmatrix} .
\end{equation}
Now, $V_2 L_1 \subseteq L_1$, $V_2 L_2 \subseteq L_1 \oplus L_2$, so $V_2$ has the form
\begin{equation}
V_2 = \begin{pmatrix}
  X   &  0   & 0 & \\
  Y   &  W_2 & Z & \\
  R   &  0   & T & \\
\end{pmatrix} .
\end{equation}
We claim that $Y = Z = 0$. Indeed,
$$
V_1 V_2 = \begin{pmatrix}
  A &  0 & 0   & \\
  0 &  I & 0   & \\
  D &  0 & W_1 & \\
\end{pmatrix}
\begin{pmatrix}
  X   &  0   & 0 & \\
  Y   &  W_2 & Z & \\
  R   &  0   & T & \\
\end{pmatrix} =
\begin{pmatrix}
  AX         &  0     & 0    & \\
  Y          &  W_2   & Z    & \\
  DX + W_1 R &  0     & W_1 T& \\
\end{pmatrix},
$$
and on the other hand,
$$
V_2 V_1 = \begin{pmatrix}
  X   &  0   & 0 & \\
  Y   &  W_2 & Z & \\
  R   &  0   & T & \\
\end{pmatrix}
\begin{pmatrix}
  A &  0 & 0   & \\
  0 &  I & 0   & \\
  D &  0 & W_1 & \\
\end{pmatrix}
 =
\begin{pmatrix}
  XA         &  0     & 0    & \\
  YA + ZD    &  W_2   & Z W_1& \\
  RA + TD    &  0     & T W_1& \\
\end{pmatrix}.
$$
Since $V_1$ and $V_2$ commute, we see that $Z W_1 = Z$, or, equivalently,
\begin{equation}\label{eq:Zzero}
(W_1^* - I)Z^* = 0 .
\end{equation}
But $W_1$ is a contraction (in fact, an isometry), and it has no invariant vectors (becuase it is the restriction of $V_1$ to $L_2 = \left( \tilde{L}_1 \vee \tilde{L}_2 \right) \ominus \tilde{L}_1$), thus by Proposition I.3.1 in \cite{SzNF70}, $W_1^*$ has no invariant vectors. This means that $W_1^* - I$ is injective, so (\ref{eq:Zzero}) implies that $Z^* = 0$, hence $Z=0$. It now follows that $Y = YA$, and arguing as in the previous sentence ($A^* A + D^* D = I$ implies that $A$ is a contraction, and it has no invariant vectors) we conclude that $Y=0$.

It turns out that $(\tilde{K},\tilde{V}_1,\tilde{V}_2)$, where $\tilde{K} := H \oplus M \oplus L_2$ and $\tilde{V}_i := V_i \big |_{\tilde{K}}$, is already an isometric dilation of $(T_1,T_2,H)$. This dilation has the property that $1$ is not an eigenvalue of $\tilde{V}_1$, and that $L := \{x \in \tilde{K} : \tilde{V}_2 x = x \} \subseteq H^\perp$. The same kind of reasoning demonstrated above can be used to show that $\tilde{V}_1$ and $\tilde{V}_2$ have the structures
$$
\tilde{V}_1 =
\begin{pmatrix}
  U_1    & 0 & \\
  0            & W & \\
\end{pmatrix},
$$
and
$$
\tilde{V}_2 =
\begin{pmatrix}
  U_2    & 0 & \\
  0            & I & \\
\end{pmatrix}.
$$
This shows that $(G,U_1,U_2)$, where $G := \tilde{K} \ominus L$ and $U_i := \tilde{V}_i \big |_G$, is also an isometric dilation of $(T_1,T_2,H)$, and this dilation has the property that $1$ is not an eigenvalue of neither $U_1$ nor $U_2$.
\end{proof}

Now, we turn to the proof of Theorem \ref{thm:mainthm}.

\begin{proof}
Let $T_1$ and $T_2$ be the cogenerators of the continuous semigroups $\{T_1 (s)\}_{s\geq0}$ and $\{T_2 (s)\}_{s\geq0}$, respectively (section III.8, \cite{SzNF70}). By Theorem III.8.1, \cite{SzNF70},
$$T_1 = \lim_{s\rightarrow 0+} \varphi_s (T_1 (s)),$$
and
$$T_2 = \lim_{s\rightarrow 0+} \varphi_s (T_2 (s)),$$
where
$$\varphi_s (x) = \frac{x-1+s}{x-1-s}\,,$$
and the limit is to be understood in the strong operator topology. For any $s>0$, $\varphi_s$ is analytic in some disc containing the spectrum of $T_i (s)$, $i=1,2$, so for all $s,t > 0$, $\varphi_s (T_1 (s))$ and $\varphi_t (T_2 (t))$ are limits in the operator norm of univariable polynomials in $T_1 (s)$ and in $T_2 (t)$, respectively. This means that for all $s,t >0$, $\varphi_s (T_1 (s))$ and $\varphi_t (T_2 (t))$ commute. Take a sequence $\{t_n\}$ of positive numbers decreasing to zero. By the proof of Theorem III.8.1, \cite{SzNF70}, it follows that $\{\varphi_{t_n} (T_1 (t_n))\}$ and $\{\varphi_{t_n} (T_2 (t_n))\}$ are bounded sequences, so
$$T_1 T_2 = \lim_{n \rightarrow \infty} \varphi_{t_n} (T_1 (t_n)) \varphi_{t_n} (T_2 (t_n)) = \lim_{n \rightarrow \infty} \varphi_{t_n} (T_2 (t_n)) \varphi_{t_n} (T_1 (t_n)) = T_2 T_1 ,$$
that is, $T_1$ and $T_2$ commute.

By Theorem III.8.1, \cite{SzNF70}, (again), $1$ is not an eigenvalue of $T_1$ or $T_2$. We apply Lemma \ref{lem:evalnot1} to obtain an isometric dilation $(V_1, V_2, K)$ of $(T_1, T_2, H)$ with $V_1$ and $V_2$ having no invariant vectors. Having no invariant vectors, $V_1$ and $V_2$ are themselves cogenerators of two continuous semigroups $\{V_1 (s) \}_{s \geq 0}$ and $\{V_2 (s) \}_{s \geq 0}$. According to Proposition III.9.2, \cite{SzNF70}, these semigroups consist of isometries.

It remains to be shown that for all $s,t > 0$, $V_1(s) V_2 (t) = V_2 (t) V_1 (s)$ and that $\{V_1(s) V_2 (t)\}_{s,t \geq 0}$ dilates $\{T_1(s) T_2 (t)\}_{s,t \geq 0}$. By Theorem III.8.1, \cite{SzNF70}, (one last time), $V_1$ and $V_2$ determine $\{V_1 (s) \}_{s \geq 0}$ and $\{V_2 (s) \}_{s \geq 0}$ by the relations
$$V_1 (s) = e_s (V_1) ,$$
and
$$V_2 (s) = e_s (V_2) ,$$
where
$$e_s (x) = \exp\left( s \frac{x+1}{x-1} \right) .$$
Now, $e_s (V_i)$, $i=1,2$, is defined as the strong limit $r \rightarrow 1^-$ of operators $e_{s,r} (V_i)$, where
$$e_{s,r}(x) = e_s (rx),$$
(section III.8, \cite{SzNF70}). The inequality $e_s(x) \leq 1$ for $x$ in the open unit disc, together with von Neumann's inequality, implies that $\|e_{s,r} (V_i)\| \leq 1$, so it follows that $V_1 (s)$ and $V_2 (t)$ commute for all $s,t \geq 0$.

Finally, to see that $\{V_1(s) V_2 (t)\}_{s,t \geq 0}$ dilates $\{T_1(s) T_2 (t)\}_{s,t \geq 0}$, we repeat the arguments that lead to (g) in Theorem III.2.3, \cite{SzNF70}. From the equality
$$T_1^m T_2^n = P_H V_1^m V_2^n \big |_H \,\,,\,\, m,n \in \mathbb{N},$$
it follows that
$$p(T_1) q(T_2) = P_H p(V_1) q(V_2) \big |_H ,$$
for all polynomials $p$ and $q$. Thus
$$e_{s,r}(T_1) e_{t,r}(T_2) = P_H e_{s,r}(V_1) e_{t,r}(V_2) \big |_H ,$$
for all $s,t \geq 0$ and $r \in (0,1)$. Because all the strong limits taken involve bounded families of operators, we arrive at
$$e_{s}(T_1) e_{t}(T_2) = P_H e_{s}(V_1) e_{t}(V_2) \big |_H ,$$
for all $s,t \geq 0$, which is just another way of writing equation (\ref{eq:what we want}). If we replace $K$ by $\bigvee_{s,t\geq0} V_1 (s) V_2 (t) H$ and $\{V_1 (s) \}, \{V_2 (s) \}$ by their restrictions to this space, we obtain a minimal dilation.
\end{proof}

\section{Regular unitary dilations for certain semigroups}

\subsection{Definitions and notation}

Throughout, $\Rp$ will denote the set $[0,\infty)$, $\Omega$ will denote some fixed set, $\mathbb{R}^\Omega$ will denote the additive group of all real valued functions on $\Omega$, and $\Rp^\Omega$ will denote the additive semigroup of all non-negative functions on $\Omega$. Addition and multiplication on $\mathbb{R}^\Omega$ are defined pointwise.

$\mathbb{R}^\Omega$ becomes a partially ordered set if one introduces the relation
$$s \leq t \Longleftrightarrow s(j) \leq t(j) \,\, , \,\, j\in\Omega .$$
The symbols $<$, $\ngeq$, etc., are to be interpreted in the obvious way.

A \emph{commensurable semigroup} is a semigroup $\Sigma$ such that for every $N$
elements $s_1, \ldots, s_N \in \Sigma$, there exist $s_0 \in \Sigma$ and $a_1, \ldots, a_N \in \mathbb{N}$ such that
$s_i = a_i s_0$ for all $i = 1, \ldots N$. For example, $\mathbb{N}$ is a commensurable semigroup. If $r\in \Rp$, then $r\cdot \mathbb{Q}_+$ is commensurable, and any commensurable subsemigroup of $\Rp$ is contained in such a semigroup.
Throughout this section $\cS$ will denote the semigroup
$$\cS = \sum_{i\in\Omega}\cS_i ,$$
where $\cS_i$ is a commensurable and unital (i.e., contains $0$) subsemigroup of $\Rp$.
To be more precise, $\cS$ is the subsemigroup of $\Rp^\Omega$ of finitely supported functions $s$ such that $s(j) \in \cS_j$ for all $j \in \Omega$. Still another way to describe $\cS$ is the following:
\bes
\cS = \left\{ \sum_{j\in \Omega} {\bf e_j}(s_j) : s_j \in \cS_j, {\rm \,\,all \,\, but\,\, finitely\,\, many\,} s_j {\rm's \,\,are\, }0 \right\},
\ees
where ${\bf e_i}$ is the inclusion of $\cS_i$ into $\prod_{j\in \Omega}\cS_j$. Here is a good example to keep in mind: if $|\Omega| = k \in \mathbb{N}$, and if $\cS_i = \mathbb{N}$ for all $i\in\Omega$, then $\cS = \mathbb{N}^k$.
We denote by $\cS - \cS$ the subgroup of $\mathbb{R}^\Omega$ generated by $\cS$. For $s \in \mathbb{R}^\Omega$ we shall denote by $s_+$ the element in $\cS$ that sends $j\in \Omega$ to $\max\{0,s(j)\}$, and $s_- = s_+ - s$.

If $u = \{u_1, \ldots, u_N\} \subseteq \Omega$, we let $|u|$ denote the number of elements in $u$ (this notation will only be used for finite sets). We shall denote by ${\bf e}[u]$ the element of $\mathbb{R}^\Omega$ having $1$ in the $i$th place for every $i\in u$, and having $0$'s elsewhere, and we denote $s[u]: = {\bf e}[u]\cdot s$, where multiplication is pointwise. If $u = \{j\}$ we shall write $s[j]$ instead of $s[\{j\}]$.

\begin{definition}
Let $\Sigma$ be a subsemigroup of $\Rp^\Omega$. We say that $\Sigma$ is \emph{$_{+,-}$-closed} if for every 
$s \in \Sigma$ both $s_+$ and $s_-$ are in $\Sigma$.
\end{definition}
Note that $\cS$ is a $_{+,-}$-closed subsemigroup of $\Rp^\Omega$, and so is $\Rp^\Omega$.
\begin{definition}
Let $\Sigma$ be a $_{+,-}$-closed subsemigroup of $\Rp^\Omega$, and let $T = \{T_s\}_{s\in\Sigma}$ be a semigroup of contractions on a Hilbert space $H$. A family of unitaries (isometries) $U = \{U_s\}_{s\in\Sigma}$ acting on a Hilbert space $K \supseteq H$ is said to be a \emph{regular unitary (isometric) dilation of $T$} if for all $s \in \Sigma - \Sigma$
$$P_H U_{s_-}^*U_{s_+}\big|_H = T_{s_-}^*T_{s_+} .$$
\end{definition}
\begin{definition}
Let  $T = \{T_s\}_{s\in\cS}$ be a semigroup of contractions on a Hilbert space $H$. $T$ is said to \emph{doubly commute} if for all $i,j\in \Omega$ such that $i \neq j$, and all $s_i \in \cS_i, s_j \in \cS_j$, the following equation holds:
$$T_{{\bf e_j}(s_j)}T_{{\bf e_i}(s_i)}^* = T_{{\bf e_i}(s_i)}^* T_{{\bf e_j}(s_j)} .$$
\end{definition}

\subsection{Regular isometric and regular unitary dilations}
\begin{lemma}\label{lem:isodil}
Let $\Sigma$ be a $_{+,-}$-closed subsemigroup of $\Rp^\Omega$, and let $V = \{V_s\}_{s\in \Sigma}$ be a semigroup of isometries on a Hilbert space $K$. Then $V$ has a unique, minimal, regular unitary dilation. If $\Sigma$ has a topology, and $V$ is weakly continuous, then $U$ is strongly continuous.
\end{lemma}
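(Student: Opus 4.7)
The plan is to adapt the standard Sz.-Nagy inductive-limit construction of the unitary extension of a single isometry (compare section I.9 of \cite{SzNF70}) to the present multi-parameter setting, building $\tilde K$ as an inductive limit indexed by $\Sigma$. The essential use of $_{+,-}$-closedness is that any two elements of $\Sigma - \Sigma$ admit a common upper bound lying in $\Sigma$, which is what makes the inductive limit well-defined.

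Concretely, I would let $\tilde K$ be the Hilbert space completion of the space of formal equivalence classes $[(\xi, s)]$ with $\xi \in K$ and $s \in \Sigma - \Sigma$, under the identification $(\xi, s) \sim (V_t \xi, s+t)$ for all $t \in \Sigma$, with inner product
\[
\bigl\langle [(\xi, s)], [(\eta, s')] \bigr\rangle = \langle V_{t-s}\xi, V_{t-s'}\eta \rangle_K
\]
for any $t \in \Sigma$ with $t - s, t - s' \in \Sigma$. Such a $t$ exists because $t = s_+ + s'_+$ works, and lies in $\Sigma$ by $_{+,-}$-closedness; independence on the choice of $t$ uses that each $V_r$ is an isometry. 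Identify $K$ with its image $\{[(\xi, 0)] : \xi \in K\}$ inside $\tilde K$, and for $r \in \Sigma - \Sigma$ define $U_r$ by $U_r[(\xi, s)] = [(\xi, s - r)]$. One checks routinely that $U_r$ is well-defined, unitary, that $r \mapsto U_r$ is a group homomorphism, and that $U_r|_K = V_r$ when $r \in \Sigma$. The regular dilation identity $P_K U_{s_-}^* U_{s_+}|_K = V_{s_-}^* V_{s_+}$ then reduces, via $U_{s_-}^* U_{s_+} = U_s$, to the identity $\langle U_s \xi, \eta \rangle = \langle V_{s_+}\xi, V_{s_-}\eta \rangle$ for $\xi, \eta \in K$, which follows by bringing $[(\xi, 0)]$ and $U_{-s}\eta = [(\eta, s)]$ to the common level $t = s_+$ and applying the inner-product formula.

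For uniqueness, suppose $(U', K')$ is any minimal regular unitary dilation of $V$. Minimality forces $K' = \bigvee_{s \in \Sigma}(U'_s)^* K$, and the map $(U'_s)^* \xi \mapsto [(\xi, s)]$ then extends to a unitary isomorphism of $K'$ onto $\tilde K$ intertwining $U'$ and $U$; the regular dilation identity is exactly what is needed to verify that this map preserves inner products. For continuity, since the $U_r$ are unitary, weak continuity of $r \mapsto U_r$ will imply strong continuity. On the total set of vectors of the form $[(\xi, s)]$ with $\xi \in K$, the identity
\[
\langle U_r[(\xi, s)], [(\eta, s')] \rangle = \langle V_{t-s+r}\xi, V_{t-s'}\eta \rangle_K,
\]
valid for any $t \in \Sigma$ with $t - s + r, t - s' \in \Sigma$, shows that this matrix coefficient depends continuously on $r$ (in a neighborhood of any fixed point) by the weak continuity of $V$; extending by linearity and density gives weak continuity on all of $\tilde K$.

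The main technical obstacle is the bookkeeping around well-definedness: showing that the inner product on $\tilde K$ does not depend on the choice of common level $t$, that the formula for $U_r$ respects the equivalence relation, and that $r \mapsto U_r$ is a group homomorphism on $\Sigma - \Sigma$ (not merely a semigroup homomorphism on $\Sigma$). All three depend on the semigroup and isometry properties of $V$ together with the $_{+,-}$-closedness of $\Sigma$, which ensures that the common levels needed at each step actually lie in $\Sigma$ itself rather than just in $\Rp^\Omega$.
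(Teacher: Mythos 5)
Your proof is correct, but it takes a genuinely different route from the paper. The paper treats $V$ first as a mere commuting \emph{family} of isometries and invokes Theorem I.9.2 of Sz.-Nagy--Foia\c{s} to produce a minimal regular unitary dilation $U$ of that family; the only work then is to upgrade $U$ to a semigroup, which is done by observing that $U_sU_t$ and $U_{s+t}$ are both unitary extensions of the isometry $V_{s+t}$ and invoking uniqueness together with minimality and commutativity. You instead build the minimal unitary extension directly as an inductive limit over the directed set of ``levels'' in $\Sigma$, which is the natural generalization of the one-isometry construction $\bigcup_n U^{-n}K$. Your route buys self-containedness: since the $V_s$ are isometries, no positivity condition (the hypothesis of I.9.1/I.9.2) ever needs to be checked, the semigroup (indeed group) property of $U$ on $\Sigma-\Sigma$ is built in rather than deduced a posteriori, and minimality, regularity, uniqueness and continuity all read off the explicit model; it also makes the role of $_{+,-}$-closedness completely transparent (it is exactly what makes the level set directed, via $t=\sum_i(s_i)_+$ with $t-s_i=(s_i)_-+\sum_{j\ne i}(s_j)_+\in\Sigma$). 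The cost is the bookkeeping you flag, and I would only note two points there: (i) the independence-of-level argument must be run for levels $\tau\in\Sigma-\Sigma$ with $\tau-s,\tau-s'\in\Sigma$, not only $\tau\in\Sigma$, since that is what arises when you check that $U_r$ is isometric (the common dominating level $\tau+(\tau'-\tau)_+$ still works); and (ii) in the uniqueness step the identity $\langle (U'_s)^*\xi,(U'_{s'})^*\eta\rangle=\langle V_{r_+}\xi,V_{r_-}\eta\rangle$ with $r=s'-s$ uses that the competing dilation $U'$ is itself a semigroup of commuting unitaries, which is the intended class of objects here (the paper is equally implicit about this). Both proofs defer the continuity argument to the same standard reasoning.
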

\begin{proof}
By Theorem I.9.2, \cite{SzNF70}, there exists a commutative system
$U = \{U_s\}_{s\in\cS}$ on a Hilbert space $L\supseteq K$ which is
a minimal, regular dilation of the family $V$, that is, for all
$s,t \in \Sigma$,
$$P_K U_s^* U_t \big|_K = V_s^* V_t .$$
Since a unitary dilation of an isometry must be an extension, we have that for all $k \in K$ and all $s,t \in \Sigma$,
$$P_K U_s U_t k = P_K U_s V_t k = P_K V_{s+t} k = V_{s+t} k .$$
In other words, both $U_s U_t$ and $U_{s+t}$ are unitary dilations of $V_{s+t}$. From the uniqueness of the minimal regular unitary dilation
we get that $U_s U_t = U_{t+s}$, that is, $U$ is a semigroup.

Continuity and uniqueness follow as in the proofs of Theorems
I.7.1 and I.9.1 of \cite{SzNF70}, respectively.
\end{proof}

The following proposition will allow us to restrict our attention only to unitary dilations.
\begin{proposition}\label{prop:unitiso}
Let $T = \{T_s\}_{s\in\cS}$ be a contractive semigroup on a Hilbert space $H$. The existence of a (minimal)[regular] unitary dilation for $T$ is equivalent to the existence of a (minimal)[regular] isometric dilation for $T$.
\end{proposition}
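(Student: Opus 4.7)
The plan is to invoke Lemma \ref{lem:isodil} as the main engine, since it manufactures a regular unitary dilation from any isometric semigroup on a $_{+,-}$-closed index set, and to do the routine bookkeeping through the tower $H \subseteq K \subseteq L$ of Hilbert spaces.

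The ``unitary $\Rightarrow$ isometric'' direction is the easy one. Given a unitary dilation $U = \{U_s\}_{s\in\cS}$ of $T$ on $L \supseteq H$, I set $K = \bigvee_{s\in\cS} U_s H$. Because $\cS$ is a semigroup and each $U_s$ preserves $K$, the restrictions $V_s := U_s|_K$ form a semigroup of isometries on $K$, and $P_H V_s|_H = P_H U_s|_H = T_s$. Minimality of $U$ in the form $L = \bigvee_{s \in \cS-\cS} U_s H$ forces $V$ to be minimal as an isometric dilation ($K = \bigvee_{s \in \cS} V_s H$), and regularity of $U$ transfers to $V$ via $V_{s_-}^* V_{s_+} = P_K U_{s_-}^* U_{s_+}|_K$, after which compressing to $H$ gives the regularity identity for $V$.

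For the reverse direction, start with an isometric dilation $V = \{V_s\}_{s\in\cS}$ of $T$ on $K \supseteq H$. Since $\cS$ is $_{+,-}$-closed, apply Lemma \ref{lem:isodil} to obtain a minimal regular unitary dilation $U = \{U_s\}_{s\in\cS}$ of $V$ on some $L \supseteq K$. The key point is that $U$ actually \emph{extends} $V$ (not merely dilates it): for any $k \in K$ and $s \in \cS$,
\[
\|k\|^2 = \|U_s k\|^2 = \|P_K U_s k\|^2 + \|P_{K^\perp} U_s k\|^2 = \|V_s k\|^2 + \|P_{K^\perp} U_s k\|^2 = \|k\|^2 + \|P_{K^\perp} U_s k\|^2,
\]
so $U_s k = V_s k \in K$. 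Consequently, for $h \in H$, $P_H U_s h = P_H V_s h = T_s h$, and $U$ is a unitary dilation of $T$. If $V$ was regular with respect to $T$, then
\[
P_H U_{s_-}^* U_{s_+}|_H = P_H\bigl(P_K U_{s_-}^* U_{s_+}|_K\bigr) P_K|_H = P_H V_{s_-}^* V_{s_+}|_H = T_{s_-}^* T_{s_+},
\]
so $U$ is regular as a dilation of $T$ as well.

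The step requiring the most care is minimality. Starting from a minimal isometric dilation $V$ (so $K = \bigvee_{s\in\cS} V_s H$) and the minimal regular unitary dilation $U$ of $V$ (so $L = \bigvee_{s\in\cS-\cS} U_s K$ in the sense given by the proof of Lemma \ref{lem:isodil}), I would substitute $K = \bigvee_t V_t H = \bigvee_t U_t H$ and use the semigroup law for $U$ to rewrite $L = \bigvee_{s\in\cS-\cS} U_s H$, which is the standard minimality condition for a unitary dilation of $T$. If the resulting $U$ is not minimal, one passes to the cyclic subspace generated by $H$ under $\{U_s\}_{s\in\cS - \cS}$; because this subspace is reducing for each $U_s$, the restrictions remain a regular unitary dilation, and the compressions to $H$ are unchanged. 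This reduction step is the only nontrivial obstacle, and it is handled by the standard observation that a reducing subspace containing $H$ carries a subdilation of the same type.
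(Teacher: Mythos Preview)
Your proof is correct and follows essentially the same route as the paper: restrict a unitary dilation to $K=\bigvee_{s\in\cS}U_sH$ to get an isometric one, and invoke Lemma~\ref{lem:isodil} to pass from an isometric dilation to a regular unitary one. You give more detail than the paper does (the explicit norm computation showing $U$ extends $V$, and the minimality bookkeeping), but the argument is the same.
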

\begin{proof}
First, note that the existnce of a unitary/isometric dilation is
equivalent to the existence of a minimal unitary/isometric
dilation. Next, let $V = \{V_s\}_{s \in \cS}$ is an isometric
dilation of $T$ on a Hilbert space $K$. By Lemma \ref{lem:isodil},
there is a regular unitary dilation $U$ of $V$. Then $U$ is a
Unitary dilation of $T$, and if $V$ is a regular dilation of $T$,
then so is $U$. This shows that the existence of a [regular]
isometric dilation for $T$ implies the existence of a [regular]
unitary dilation for $T$.

To show the converse, assume that $U = \{U_s\}_{s\in\cS}$ is a minimal, regular, unitary dilation of $T$ on $L \supseteq H$. Minimality means that
$$L = \bigvee_{s\in \cS - \cS}U_s H .$$
Define
$$K := \bigvee_{s\in \cS}U_s H ,$$
and $V_s := U_s\big|_K$. $V = \{V_s\}_{s\in \cS}$ is clearly a family of isometries. For all $k\in K$, $s,t \in \cS$,
$$V_{s}V_{t}k = U_{s}U_{s}k = U_{s+t}k = V_{s+t}k ,$$
so $V$ is a semigroup.
For all $h,g\in H$, $s \in \cS$,
$$\langle V_{s_-}^* V_{s_+} h, g\rangle = \langle U_{s_+} h, U_{s_-}g \rangle = \langle T_{s_-}^* T_{s_+} h ,g \rangle ,$$
so $V$ is a regular dilation of $T$. The case that $U$ is not necessarily a regular dilation is similar.
\end{proof}

\subsection{Regular unitary dilations of semigroups over $\cS$}
\begin{theorem}
Let $T=\{T_s\}_{s\in\cS}$ be a semigroup of contractions on a Hilbert space $H$. $T$ has a regular unitary dilation
if and only if for every finite set $v \subseteq \Omega$ and every $s \in \cS$,
\be\label{eq:condition}
\sum_{u\subseteq v}(-1)^{|u|}T^*_{s[u]}T_{s[u]} \geq 0 .
\ee
The minimal regular unitary dilation of $T$ is unique up tp unitary equivalence. If $\cS$ has a topology on it, and if $T$ is weakly continuous, then the unitary dilation is strongly continuous.
\end{theorem}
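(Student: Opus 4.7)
My plan is to bootstrap from Brehmer's classical dilation theorem (Theorem I.9.1 of \cite{SzNF70}) using the commensurability structure and an inductive limit. The starting observation is this: for any fixed finite $v \subseteq \Omega$ and $s \in \cS$, the operators $A_i := T_{{\bf e_i}(s(i))}$, $i \in v$, are commuting contractions with $T_{s[u]} = \prod_{i \in u} A_i$ for every $u \subseteq v$, so condition (\ref{eq:condition}) is nothing but Brehmer's positivity criterion for the tuple $\{A_i\}_{i \in v}$. Necessity then falls out immediately: if $U$ is a regular unitary dilation of $T$, the restriction $\{U_{{\bf e_i}(s(i))}\}_{i \in v}$ is a regular unitary dilation of $\{A_i\}_{i \in v}$ (by applying the regular-dilation identity to $t$ in the subgroup generated by the ${\bf e_i}(s(i))$, which is contained in $\cS - \cS$), and Brehmer's theorem forces (\ref{eq:condition}).

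For sufficiency, I would parameterize a directed system by the set $\cF$ of pairs $(v, s)$ with $v \subseteq \Omega$ finite and $s \in \cS$ supported exactly on $v$, declaring $(v, s) \leq (v', s')$ if $v \subseteq v'$ and $s(i) \in \mathbb{N} \cdot s'(i)$ for every $i \in v$, so that the cyclic sub-semigroup
\begin{equation*}
\cS(v,s) := \Bigl\{ \sum_{i \in v} a_i {\bf e_i}(s(i)) : a_i \in \mathbb{N} \Bigr\} \cong \mathbb{N}^{|v|}
\end{equation*}
embeds into $\cS(v', s')$. Commensurability of each $\cS_i$ is used precisely here to make $\cF$ directed (given two pairs, pick a common $\cS_i$-divisor of the relevant entries for each $i$ in the joint index set), while $\bigcup_{(v,s) \in \cF} \cS(v,s) = \cS$ because every $t \in \cS$ is captured by taking $v = \mathrm{supp}\, t$ and $s = t$. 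Brehmer's theorem applied to the $|v|$-tuple $\{A_i\}_{i \in v}$ then supplies, for each $(v,s)$, a minimal regular unitary dilation $\{U^{(v,s)}_t\}_{t \in \cS(v,s) - \cS(v,s)}$ acting on a Hilbert space $K(v,s) \supseteq H$.

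The main step is to show that these local dilations glue coherently. For $(v, s) \leq (v', s')$ the restriction of $U^{(v',s')}$ to indices in $\cS(v,s) - \cS(v,s)$ is a (generally non-minimal) regular unitary dilation of $T|_{\cS(v,s)}$; compressing it to the invariant subspace $\bigvee_{t \in \cS(v,s) - \cS(v,s)} U^{(v',s')}_t H \subseteq K(v',s')$ and invoking the uniqueness clause of Brehmer's theorem produces a canonical $H$-preserving isometric embedding $K(v,s) \hookrightarrow K(v',s')$ intertwining the respective unitaries. The same uniqueness principle gives transitivity of these embeddings along chains. Taking the inductive limit $K := \varinjlim K(v,s)$ and extending the unitaries naturally yields a family $\{U_t\}_{t \in \cS - \cS}$ of unitaries on $K$ that, by construction, is a regular unitary dilation of $T$. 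Uniqueness of the minimal dilation over $\cS$ descends stage by stage from Brehmer's uniqueness; weak continuity of $T$ transfers to weak, and hence (for a unitary group) strong, continuity of $U$ by combining $\langle U_t h, k\rangle = \langle T_t h, k\rangle$ for $h, k \in H$ and $t \in \cS$ with minimality $K = \bigvee_{r \in \cS - \cS} U_r H$.

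I expect the central technical obstacle to be the transitivity bookkeeping: confirming that the canonical embeddings produced by uniqueness genuinely compose correctly along chains $(v_1, s_1) \leq (v_2, s_2) \leq (v_3, s_3)$, so that the inductive limit defines a single Hilbert space and the extended $\{U_t\}$ is a well-defined semigroup rather than just a coherent family of partial isometries. Once that compatibility is installed, the semigroup law, the regular-dilation identity on all of $\cS - \cS$, uniqueness, and continuity are all routine passage-to-the-limit consequences of their $\mathbb{N}^k$ counterparts.
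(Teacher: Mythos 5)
Your proof is correct, but the sufficiency direction takes a genuinely different route from the paper's. The paper never builds an inductive limit: it defines $\hat{T}(t) = T_{t_-}^*T_{t_+}$ on the group $\cS - \cS$ and observes that positive definiteness is a condition involving only finitely many group elements at a time; by commensurability those elements lie in a subgroup generated by finitely many ${\bf e_i}(s_0(i))$, where the computation in the proof of Brehmer's theorem (I.9.1 of Sz.-Nagy--Foia\c{s}) shows $\hat{T}$ is positive definite. A single application of the group dilation theorem (I.7.1) to $\hat{T}$ then produces the unitary representation of $\cS-\cS$ globally, with no gluing required. Your construction instead applies Brehmer's theorem locally over each $\cS(v,s)\cong\mathbb{N}^{|v|}$ and assembles the result as a direct limit; this works, and the ``central technical obstacle'' you flag is in fact harmless: the intertwining unitary supplied by uniqueness of the minimal regular dilation is the \emph{unique} map sending $U^{(v,s)}_t h \mapsto U^{(v',s')}_t h$ on a total set, so transitivity along chains is automatic and the limit unitaries are well defined on the dense union $\bigcup K(v,s)$ (each $K(v,s)$ being reducing for the relevant sub-family). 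What the paper's route buys is economy --- locality is absorbed into the very definition of positive definiteness, so the limit construction, the directed set, and the coherence checks all disappear; what your route buys is that it stays entirely within the dilation-theoretic statement of Brehmer's theorem and never has to re-examine its proof to extract the positive-definiteness computation. One small point of care in your version: the weak-to-strong continuity claim for $U$ should be run through the identity $\langle U_t U_{r}h, U_{p}g\rangle = \langle T_{(t+r-p)_-}^*T_{(t+r-p)_+}h,g\rangle$ on the minimal spanning set, exactly as in the proofs of I.7.1 and I.9.1, which is also all the paper does.
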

\begin{proof}
Assume that $T$ has a regular unitary dilation $V$. Let $v \subseteq \Omega$ be finite, and let $s\in \Omega$.
The set of contractions
$\{T_{s[j]}\}_{j\in v}$ has a regular unitary dilation, namely $\{V_{s[j]}\}_{j\in v}$. By Theorem I.9.1, \cite{SzNF70}, (\ref{eq:condition}) holds.

Assume that for all finite $v \subseteq \Omega$ and $s\in \cS$, (\ref{eq:condition}) holds. We define a function
$T(\cdot): \cS - \cS \rightarrow B(H)$ by
$$\hat{T}(s) = T_{s_-}^*T_{s_+} .$$
By the commensurablity of the $\cS_i$'s and by the proof of
Theorem I.9.1 in \cite{SzNF70}, it follows that $\hat{T}(\cdot)$
is a positive definite function. By Theorem I.7.1 in
\cite{SzNF70}, there is a Hilbert space $K\supseteq H$ and a
unitary representation $U:\cS - \cS \rightarrow B(K)$ such that
$$P_H U(s_-)^*U(s_+)\big|_H = P_H U(s)\big|_H = \hat{T}(s) = T_{s_-}^*T_{s_+} \,\, , \,\, s\in \cS - \cS .$$
Uniqueness and continuity are proved in the usual way (I.9.1, \cite{SzNF70}).
\end{proof}
\begin{corollary}
Let $T = \{T_s\}_{s\in\cS}$ be a doubly commuting contractive semigroup on a Hilbert space $H$. Then
$T$ has a unique, minimal, doubly commuting, regular unitary/isometric dilation.
\end{corollary}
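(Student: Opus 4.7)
The plan is to reduce the corollary to the preceding theorem by verifying that double commutation forces condition (\ref{eq:condition}) to hold, and then to extract the doubly commuting property of the dilation for free from the fact that the regular unitary dilation naturally extends to a group representation of $\cS - \cS$.

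First I would fix a finite set $v \subseteq \Omega$ and $s \in \cS$, and expand
\[
T_{s[u]} = \prod_{j \in u} T_{\mathbf{e_j}(s(j))}
\]
for each $u \subseteq v$. Using the standing commutation $T_{\mathbf{e_i}(s_i)}T_{\mathbf{e_j}(s_j)} = T_{\mathbf{e_j}(s_j)}T_{\mathbf{e_i}(s_i)}$ together with the doubly commuting hypothesis $T_{\mathbf{e_j}(s_j)}T_{\mathbf{e_i}(s_i)}^* = T_{\mathbf{e_i}(s_i)}^* T_{\mathbf{e_j}(s_j)}$ for $i \neq j$, one can push every adjoint past every non-adjoint factor and regroup so that
\[
T^*_{s[u]} T_{s[u]} = \prod_{j \in u} A_j, \qquad A_j := T^*_{\mathbf{e_j}(s(j))} T_{\mathbf{e_j}(s(j))}.
\]
The same two commutation relations show that $A_i$ and $A_j$ commute for all $i,j \in v$.

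Next I would exploit the factorization of the alternating sum. Since the $A_j$'s mutually commute,
\[
\sum_{u \subseteq v}(-1)^{|u|} T^*_{s[u]} T_{s[u]} = \sum_{u \subseteq v}(-1)^{|u|}\prod_{j \in u} A_j = \prod_{j \in v}(I - A_j).
\]
Each factor $I - A_j$ is positive because $T_{\mathbf{e_j}(s(j))}$ is a contraction, and a product of mutually commuting positive operators is positive (any two can be written as $A^{1/2}BA^{1/2}$). Hence (\ref{eq:condition}) holds, and the preceding theorem supplies a minimal regular unitary dilation $U = \{U_s\}_{s \in \cS - \cS}$, which is unique up to unitary equivalence and weakly/strongly continuous if $T$ is. The isometric version then follows via Proposition \ref{prop:unitiso}.

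It remains to observe that $U$ is itself doubly commuting, and this is the only place where one must be slightly careful but also where the payoff is immediate. Because $U$ extends to a unitary representation of the group $\cS - \cS$, one has $U^*_{\mathbf{e_i}(s_i)} = U_{-\mathbf{e_i}(s_i)}$, and hence
\[
U_{\mathbf{e_j}(s_j)} U^*_{\mathbf{e_i}(s_i)} = U_{\mathbf{e_j}(s_j) - \mathbf{e_i}(s_i)} = U^*_{\mathbf{e_i}(s_i)} U_{\mathbf{e_j}(s_j)}
\]
for all $i \neq j$. Thus $U$ is automatically doubly commuting, completing the proof. The main conceptual obstacle, which is really only bookkeeping, is the combinatorial identity that turns the alternating sum into the product $\prod_{j \in v}(I - A_j)$; once that telescoping is in hand, the rest is a direct appeal to the previous theorem and Proposition \ref{prop:unitiso}.
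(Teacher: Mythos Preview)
Your verification of condition~(\ref{eq:condition}) via the telescoping identity
\[
\sum_{u\subseteq v}(-1)^{|u|}\prod_{j\in u}A_j=\prod_{j\in v}(I-A_j)
\]
is correct and in fact supplies the details that the paper merely asserts. Your argument that the minimal regular \emph{unitary} dilation doubly commutes is also fine and matches the paper's one-line observation that any unitary semigroup doubly commutes.

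The gap is in the isometric case. You write that ``the isometric version then follows via Proposition~\ref{prop:unitiso},'' but that proposition only yields the \emph{existence} of a minimal regular isometric dilation $V=\{V_s\}_{s\in\cS}$; it says nothing about $V$ being doubly commuting. Concretely, if $U$ acts on $L$ and one sets $K=\bigvee_{s\in\cS}U_sH$ and $V_s=U_s|_K$, then $V_{s}^{*}=P_KU_{s}^{*}|_K$, which is \emph{not} simply $U_{s}^{*}|_K$ unless $K$ is invariant under $U_s^{*}$. So the identity $U_{\mathbf{e_j}(s_j)}U_{\mathbf{e_i}(s_i)}^{*}=U_{\mathbf{e_i}(s_i)}^{*}U_{\mathbf{e_j}(s_j)}$ does not automatically descend to $V$. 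The paper handles this by invoking the commensurability of the $\cS_i$'s to reduce to finitely many generators and then citing Theorem~3.10 of \cite{S06}, whose content is precisely that the minimal regular isometric dilation of a doubly commuting semigroup over $\mathbb{N}^k$ is itself doubly commuting. You need either that citation or an independent argument for this step.
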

\begin{proof}
A doubly commuting semigroup must satisfy (\ref{eq:condition}), thus $T$ has a unique, minimal, regular unitary/isometric dilation. Every unitary semigroup doubly commutes. To see that a minimal, regular, isometric dilation of a doubly commuting semigroup doubly commutes, one uses the commensurability of the $\cS_i$'s and
Theorem 3.10 of \cite{S06} (the content of that Theorem, simplified to this context, is that a minimal, regular, isometric dilation of a doubly commuting semigroup \emph{over} $\mathbb{N}^k$ doubly commutes).
\end{proof}

\subsection{Unitary dilation of a multi-parameter (not necessarily continuous) semigroup of coisometries}
\begin{theorem}
Let $\Sigma$ be a $_{+,-}$-closed subsemigroup of $\Rp^\Omega$, and let $T = \{T_s\}_{s\in\Sigma}$ be a subsemigroup of coisometries on a Hilbert space $H$. There exists a Hilbert space $K \supseteq H$ and a family of $U = \{U_s\}_{s\in\Sigma}$ unitaries on $K$ such that:
\begin{enumerate}
    \item For all $s\in\Sigma - \Sigma$,
    $$P_H U_{s_-}U_{s_+}^*\big|_H = T_{s_-}T_{s_+}^* ;$$
    \item $K = \bigvee_{s\in \Sigma} U_{s} H$.
\end{enumerate}
If $\Sigma$ has a topolgy on it and $T$ is weakly continuous, then $U$ is strongly continuous. Furthermore, a unitary dilation satisfying the assertions of the theorem is unique up to unitary equivalence.
\end{theorem}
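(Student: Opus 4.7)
The plan is to dualize and invoke Lemma \ref{lem:isodil}. Since each $T_s$ is a coisometry, $V_s := T_s^*$ is an isometry, and $V := \{V_s\}_{s\in\Sigma}$ is a commutative semigroup of isometries on $H$, weakly continuous whenever $T$ is. Lemma \ref{lem:isodil} therefore supplies a unique minimal regular unitary dilation $W = \{W_s\}_{s\in\Sigma}$ of $V$ on a Hilbert space $K\supseteq H$, satisfying
$$P_H W_s^* W_t \big|_H = V_s^* V_t = T_s T_t^*, \quad s,t\in\Sigma .$$
Setting $U_s := W_s^*$, commutativity of $\{W_s\}$ gives $U_s U_t = (W_t W_s)^* = W_{s+t}^* = U_{s+t}$, so $U$ is a commutative semigroup of unitaries on $K$, and substituting $s\mapsto s_-$, $t\mapsto s_+$ in the displayed identity yields precisely condition (1). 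Strong continuity of $U$ follows from strong continuity of $W$ by the standard orbit estimate for unitary groups.

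What remains is condition (2) and uniqueness, and the only slightly subtle point is reconciling two versions of minimality. The minimality in Lemma \ref{lem:isodil}, inherited from Theorem I.9.2 of \cite{SzNF70}, is naturally phrased via the canonical extension $\widehat{W}$ of $W$ to a unitary representation of the group $\Sigma - \Sigma$, $\widehat{W}_s := W_{s_+} W_{s_-}^*$, in the form $K = \bigvee_{s\in\Sigma - \Sigma} \widehat{W}_s H$. Condition (2) asks instead for $K = \bigvee_{s\in\Sigma} W_s^* H$. One inclusion is immediate since $W_s^* = \widehat{W}_{-s}$ for $s\in\Sigma$. For the other, because $W_{s_+}|_H = V_{s_+}$ sends $H$ into $H$, the subspace $M := \bigvee_{t\in\Sigma} W_t^* H$ is invariant under every $W_{s_+}$: $W_{s_+}$ commutes with each $W_t^*$ and carries $H$ into $H\subseteq M$. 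Hence $\widehat{W}_s H = W_{s_+}(W_{s_-}^* H) \subseteq W_{s_+} M \subseteq M$, and the two minimality spaces coincide.

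For uniqueness, I would run the reduction backwards. If $U'$ on $K' \supseteq H$ also satisfies (1) and (2), then specializing (1) to $s \in -\Sigma$ yields $P_H U'_t|_H = T_t$ for all $t\in\Sigma$, so $W'_s := (U'_s)^*$ satisfies $P_H W'_s|_H = V_s$, and the standard argument that a unitary compressing to an isometry extends it forces $W'_s|_H = V_s$. The full statement of (1), read in terms of $W'$, is exactly the regular unitary dilation identity $P_H (W'_{s_-})^* W'_{s_+}|_H = V_{s_-}^* V_{s_+}$ of the paper's definition, and the invariance argument above transfers (2) for $U'$ to minimality of $W'$. The uniqueness clause of Lemma \ref{lem:isodil} then identifies $W'$ and $W$ via a unitary fixing $H$, and passing to adjoints delivers the equivalence of $U'$ with $U$. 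The minimality bookkeeping in the middle paragraph is the only real technical step; everything else is a direct translation across the adjoint.
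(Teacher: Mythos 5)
Your proposal is correct and takes essentially the same route as the paper: the paper's entire proof is the one-line remark that the theorem follows from Lemma \ref{lem:isodil} applied to $T^* = \{T_s^*\}_{s\in\Sigma}$, with item (2) justified by noting that the adjoint family extends $T^*$. You have merely written out the bookkeeping (identifying the two minimality subspaces and transferring uniqueness across the adjoint) that the paper leaves implicit.
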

\begin{proof}
The theorem follows from Lemma \ref{lem:isodil} by looking at $T^* = \{T_s^*\}_{s\in\Sigma}$. To justify item (2), note that $U^*$ is an extension of $T^*$.
\end{proof}


\begin{thebibliography}{9}

%

\bibitem {Ptak} M. Ptak, \emph{Unitary dilations of multiparameter semigroups of operators},
 Ann. Polon. Math.  \textbf{45}  (1985),  no. 3, 237--243.

\bibitem {Slocinski} M. S\l oci\'nski, \emph{Unitary dilation of two-parameter semi-groups of contractions},
Bull. Acad. Polon. Sci. Se'r. Sci. Math. Astronom. Phys.
\textbf{22} (1974), 1011--1014.

\bibitem {S06} Baruch Solel, \emph{Regular dilations of representations of product systems},
arXiv:math.OA/0504129, 2006.

\bibitem {SzNF70}B. Sz.-Nagy and C. Foia\c{s}, \emph{Harmonic Analysis of
Operators in Hilbert Space}, North-Holland, Amsterdam, 1970.


\end{thebibliography}
\end{document}